\newcommand{\RR}{{\mathbb R}}
\newcommand{\CC}{{\mathbb C}}
\def\bege{\begin{equation}} \def\ende{\end{equation}}
\def\begr{\begin{eqnarray}} \def\endr{\end{eqnarray}}
\def\BB{ \mathbb{B}}
\def\CC{ \mathbb{C}}
\newcommand{\DD}{{\mathbb D}}
\def\R{\mathcal{R}}
\def\T{\mathcal{T}}
\def\D{\mathbb{D}}
\def\N{\mathbb N}
\def\hD{\hat{\mathcal{D}}}
\def\dD{\mathcal{D}}
\def\om{\omega}
\def\p{{\prime}}
\def\begr{\begin{eqnarray}} \def\endr{\end{eqnarray}}
\def\msk{\medskip}
\def\ol{\overline}
\newtheorem{Lemma}{Lemma}
\newtheorem{Theorem}{Theorem}
\begin{document}
\title[  Schatten-Herz class Toeplitz operators  ]{ Schatten-Herz class Toeplitz operators on weighted Bergman spaces induced by  doubling weights}

 \author{ Juntao Du and    Songxiao Li$\dagger$ }
 \address{Juntao Du\\ Faculty of Information Technology, Macau University of Science and Technology, Avenida Wai Long, Taipa, Macau.}
 \email{jtdu007@163.com  }

 \address{Songxiao Li\\ Institute of Fundamental and Frontier Sciences, University of Electronic Science and Technology of China,
 610054, Chengdu, Sichuan, P.R. China. } \email{jyulsx@163.com}

 \subjclass[2000]{30H20, 47B35 }
 \begin{abstract} Schatten-Herz class  Toeplitz operators on  weighted Bergman spaces induced by  doubling weights  are  investigated in this paper.
 \thanks{$\dagger$ Corresponding author.}
 \vskip 3mm \noindent{\it Keywords}:  Weighted Bergman  space, Toeplitz operator, doubling weight,  Schatten-Herz class.
 \end{abstract}
 \maketitle

\section{Introduction}
Let $\D$ be the open unit disk in the complex plane, and $H(\D)$ be the class of all functions analytic on $\D$.
For any $z\in\D$ and $r>0$, let  $D(z,r)=\{w\in\D:\beta(z,w)<r\}$ be the Bergman disk.
Here  $\beta(\cdot,\cdot)$ is the Bergman metric  on $\DD$.
If $\{a_j\}_{j=1}^\infty\subset\D $ satisfying $\inf_{i\neq j}\beta(a_i,a_j)\geq s>0$, we say that $\{a_j\}_{j=1}^\infty\subset\D $ is $s$-separated.

A function  $\om: \D \rightarrow [0,\infty)$ is called a weight if it is  positive and integrable. $\om$ is radial if $\om(z)=\om(|z|)$ for all $z\in\DD$.  Let  $\om$ be a  radial weight and $\hat{\om}(r)=\int_r^1 \om(s)ds$ for $r\in [0,1)$.
We say that $\om$ is a doubling weight, denoted by $\om\in \hD$, if  there is a constant $C>0$ such that
$\hat{\om}(r)<C\hat{\om}(\frac{1+r}{2})$ when $   0\leq r<1.$ If there exist $K>1$ and $C>1$ such that
$\hat{\om}(r)\geq C\hat{\om}(1-\frac{1-r}{K}) $ when $  0\leq r<1$, we say that $\om$ is a reverse doubling weight,    denoted by $\om\in\check{\mathcal{D}}$.
We say that $\om$ is a regular weight, denoted by $\om\in\R$, if there exists $C>1$, such that
$$\frac{1}{C}<\frac{\hat{\om}(r)}{(1-r)\om(r)}<C, \,\,\mbox{ when }\,\,0\leq r<1 .$$
We denote by $\dD= \hD \bigcap\check{\mathcal{D}}$.   From \cite{PjaRj2016am}, we see that $\R\subset\dD$.
More details about $\R$, $\dD$ and $\hD$  can be found in \cite{Pja2015,PjaRj2014book,PjaRj2016am,PjaRjSk2018jga,PjRj2016jmpa}.

When $0<p<\infty$ and $\om\in\hD$,    the weighted Bergman space $A_\om^p$ is the space of all $f\in H(\D)$ such that
$$\|f\|_{A_\om^p}^p=\int_{\D}|f(z)|^p\om(z)dA(z)<\infty,$$
where $dA(z)$ is the normalized Lebesgue area measure on $\D$. When $\om(z)=(1-|z|^2)^\alpha(\alpha>-1)$, the space $A_\om^p$ becomes the classical weighted Bergman space $A_\alpha^p$. When $\alpha=0$, we will write $A_\alpha^p=A^p$. Suppose $0<p\leq \infty$ and $\mu$ is a positive Borel measure on $\DD$. Let $L_\mu^p$ denote the Lebesgue space defined in a standard way.

Let  $\om_s=\int_0^1 r^s\om(r)dr$ and $B_z^\om(w)=\frac{1}{2}\sum_{k=0}^\infty \frac{(w\ol{z})^k}{\om_{2k+1}} .$ Then $B_z^\om$ is the reproducing kernel for
$A_\om^2$, which means that for any $f\in A^2_\om$(see \cite{PjRj2016jmpa}),
\begin{align*}
f(z)=\langle f, B_z^\om \rangle_{A_\om^2}=\int_\BB f(w)\ol{B_z^\om(w)}\om(w)dA(w).
\end{align*}
Let $\mu$ be a positive Borel measure on $\D$. The Toeplitz operator  $\T_\mu$ and the Berezin transform $\widetilde{\T_\mu}$ of $\T_\mu$ are defined by
$$\T_\mu f (z)=\int_\BB f(\xi)\ol{B_z^\om(\xi)}d\mu(\xi), \,\,\,\,\,\,\,\,\,\,
\widetilde{\T_\mu}(z)=\frac{\langle \T_\mu B_z^\om, B_z^\om \rangle_{A_\om^2}}{\|B_z^\om\|_{A_\om^2}^2},\,\,\,\,z\in\DD,$$
respectively.

For $k=0,1,2,\cdots$, let $\Lambda_k=\{z\in\DD:1-\frac{1}{2^k}\leq|z|<1-\frac{1}{2^{k+1}}\}$  and $\chi_k$ be the characteristic function of $\Lambda_k$.
Let $d\lambda(z)=\frac{dA(z)}{(1-|z|^2)^2}$ be the M\"obious invariant area measure on $\DD$.
For $0<p\leq \infty$ and $0\leq q\leq \infty$, let $\mathrm{K}_q^p(\lambda)$ denote the Herz space, which consists of all measurable functions  $f$ such that
$$\|f\|_{\mathrm{K}_q^p(\lambda)}=\| \{\|f\|_{L^p_{\lambda\chi_k}}\}_{k=0}^\infty  \|_{l^q}<\infty.$$
Note that $\mathrm{K}_p^p(\lambda)=L^p_\lambda$.
Here, $l^q(0\leq q\leq \infty)$ consists of  all  complex sequences $\{a_k\}_{k=1}^\infty$ such that $\|\{a_k\}\|_{l^q}<\infty$, where
$$
\|\{a_k\}\|_{l^q}=\left\{
\begin{array}{cc}
  \left(\sum_{k=1}^\infty |a_k|^q\right)^\frac{1}{q}, & 0<q<\infty; \\
  \sup_{k\geq 1}|a_k|, & q=\infty; \\
  \limsup_{k\to\infty}|a_k|, & q=0.
\end{array}
\right.
$$

Let $j=0,1,2,\cdots$ and
$\lambda_j=\inf\{\|\T_\mu-R\|_{A_\om^2 \to A_\om^2}:\mbox{rank}(R)\leq j\}.$
If $\{\lambda_j\}_{k=0}^\infty\in l^p$ for some $p\in(0,\infty)$, we say that $\T_\mu$ belongs to the Schatten $p$-class, denoted by $\T_\mu\in \mathcal{S}_p(A_\om^2)$.
We denote by $\mathcal{S}_\infty$ the class of all bounded linear operators on $A_\om^2$.
For $0<p\leq\infty$ and $0\leq q\leq \infty$, $\T_\mu$ is said to belong to the Schatten-Herz class, denoted by  $\mathcal{S}_{p,q}$,     if $\T_{\mu\chi_k} \in \mathcal{S}_p$
and $$\|\T_\mu\|_{\mathcal{S}_{p,q}}=\|\{\|\T_{\mu\chi_k}\|_{\mathcal{S}_p}\}_{k=0}^\infty\|_{l^q}<\infty.$$
Here $\|\T_{\mu\chi_k}\|_{\mathcal{S}_p}$ means the Schatten $p$-class norm of $\T_{\mu\chi_k}$ on $A_\om^2$, see \cite{zhu} for example.
In \cite{LmLmPs2006ieot},  Loaiza,  L\'opez-Garc\'ia and   P\'erez-Esteva  considered Schatten-Herz class Toeplitz operators  on $A^2$  for the first time.
For more study on Herz spaces and Schatten-Herz class Toeplitz operators, see \cite{BoPs2011ieot, CbKhKn2007nmj, CeNk2007jmaa,HzTx2010pams,LmLmPs2006ieot, LxHz2010, LxHz2013mn}.

In \cite{PjaRj2016am,PjaRjSk2018jga}, Pel\'aez, R\"atty\"a and Sierra investigated the boundedness, compactness and Schatten class Toeplitz operators on Bergman spaces induced by doubling weights.
In this paper, we investigate  Schatten-Herz class Toeplitz operators on $A_\om^2$ with $\om\in\hD$.
The main result of this paper is stated as follows.

\begin{Theorem}\label{1103-1}
Suppose $0<p\leq \infty$, $0\leq q\leq \infty$, $r>0$, $\om\in\hD$ and $\mu$ is a positive Borel measure.
Let $\{a_j\}\subset\DD$ such that $\DD=\cup_{j=1}^\infty D(a_j,r)$ and $\{a_j\}$ are $s$-separated for some $0<s<r<\infty.$
Let
$$\widehat{\mu_r}(z)=\frac{\mu(D(z,r))}{(1-|z|)\hat{\om}(z)}\,\,\mbox{ and }\,\,
\|\{\widehat{\mu_r}(a_j)\}\|_{l_q^p}= \|\{\|\{(\widehat{\mu_r}\chi_k)(a_j)\}_{j=1}^\infty\|_{l^p}\}_{k=0}^\infty\|_{l^q}.$$
\begin{enumerate}[(i)]
  \item The  following statements are equivalent.
       \begin{itemize}
         \item [{\it (a)}] $\T_\mu\in\mathcal{S}_{p,q}$;
         \item [{\it (b)}]  $\widehat{\mu_r}\in\mathrm{K}_q^p(\lambda)$;
         \item [{\it (c)}]  $\|\{\widehat{\mu_r}(a_j)\}\|_{l_q^p}<\infty$.
       \end{itemize}
       Moreover,   $$\|\T_\mu\|_{\mathcal{S}_{p,q}}\approx\|\widehat{\mu_r}\|_{\mathrm{K}_q^p(\lambda)}\approx\|\{\widehat{\mu_r}(a_j)\}\|_{l^p_q}.$$
  \item If $\om\in\dD$ such that $\frac{(1-|\cdot|)^{p}\hat{\om}(\cdot)^p}{(1-|\cdot|)^{2}}\in\dD$,  then $\T_\mu\in\mathcal{S}_{p,q}$ if and only if $\widetilde{\T_\mu}\in\mathrm{K}_q^p(\lambda)$.
      \end{enumerate}
 Moreover,    $$\|\T_\mu\|_{\mathcal{S}_{p,q}}\approx \|\widetilde{\T_\mu}\|_{\mathrm{K}_q^p(\lambda)}.$$
\end{Theorem}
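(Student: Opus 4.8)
The plan is to reduce everything to the single–Toeplitz–operator case and then propagate it through the annular decomposition using the finite-overlap geometry of Bergman disks. The one nontrivial input I would first record as a lemma, following the doubling-weight Schatten theory of Pel\'aez--R\"atty\"a--Sierra \cite{PjaRjSk2018jga}, is that for any positive Borel measure $\nu$ and any $0<p<\infty$,
\begin{equation}\label{eq:single-op}
\|\T_\nu\|_{\mathcal{S}_p}^p\approx\int_\D\widehat{\nu_r}(z)^p\,d\lambda(z)\approx\sum_j\widehat{\nu_r}(a_j)^p,
\end{equation}
with the case $p=\infty$ reading $\|\T_\nu\|_{\mathcal{S}_\infty}\approx\|\widehat{\nu_r}\|_{L^\infty_\lambda}\approx\sup_j\widehat{\nu_r}(a_j)$; the comparison constants are independent of $r$ because $\om\in\hD$ forces $(1-|z|)\hat{\om}(z)$, hence $\widehat{\nu_r}$, to oscillate boundedly on Bergman disks. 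Two elementary geometric facts drive the rest: (1) each $\Lambda_k$ has Bergman width $\approx\log 2$, so a disk $D(z,r)$ with $z\in\Lambda_k$ meets only the annuli $\Lambda_m$ with $|m-k|\le N$ for some $N=N(r)$; and (2) the separation of $\{a_j\}$ gives bounded overlap of the disks $D(a_j,r)$, so $\lambda(D(a_j,s))\approx 1$.

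For part (i), I would fix $k$ and apply \eqref{eq:single-op} to $\nu=\mu\chi_k$, obtaining $\|\T_{\mu\chi_k}\|_{\mathcal{S}_p}^p\approx\sum_j\widehat{(\mu\chi_k)_r}(a_j)^p$. Writing $b_{k,j}=\widehat{(\mu\chi_k)_r}(a_j)$ and $c_{k,j}=\widehat{\mu_r}(a_j)\chi_k(a_j)$, fact (1) shows $b_{k,j}=0$ unless $a_j\in\Lambda_m$ with $|m-k|\le N$, while the additivity $\mu(D(a_j,r))=\sum_k\mu(D(a_j,r)\cap\Lambda_k)$ gives, for $a_j\in\Lambda_m$, the band relation $\widehat{\mu_r}(a_j)=\sum_{|k-m|\le N}b_{k,j}$. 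A short application of the power-sum inequality (used in the direction appropriate to $p\ge 1$ or to $0<p<1$) then yields, uniformly in $k$,
\begin{equation*}
\|b_{k,\cdot}\|_{l^p}^p\lesssim\sum_{|m-k|\le N}\|c_{m,\cdot}\|_{l^p}^p,\qquad\|c_{k,\cdot}\|_{l^p}^p\lesssim\sum_{|m-k|\le N}\|b_{m,\cdot}\|_{l^p}^p.
\end{equation*}
Since finite-band sums are bounded on $l^q$ for every $0\le q\le\infty$ (a shift window of fixed length changes neither the $l^q$ quasinorm nor a $\limsup$), taking $l^q$ norms in $k$ gives $(a)\Leftrightarrow(c)$ together with the asserted equivalence of norms. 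The remaining equivalence $(b)\Leftrightarrow(c)$ is just the discretization of $\int_{\Lambda_k}\widehat{\mu_r}^p\,d\lambda$ by the samples $\widehat{\mu_r}(a_j)$, valid annulus-by-annulus up to the same finite band because $\widehat{\mu_r}$ oscillates boundedly and $\lambda(D(a_j,s))\approx 1$; summing in $l^q$ completes (i).

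For part (ii), the easy inclusion is the pointwise bound $\widehat{\mu_r}(z)\lesssim\widetilde{\T_\mu}(z)$, which follows from $\|B_z^\om\|_{A_\om^2}^2=B_z^\om(z)\approx\frac{1}{(1-|z|)\hat{\om}(z)}$ and the Harnack-type lower bound for $|B_z^\om(\xi)|^2$ on $D(z,r)$; by monotonicity of the Herz norm this gives $\|\widehat{\mu_r}\|_{\mathrm{K}_q^p(\lambda)}\lesssim\|\widetilde{\T_\mu}\|_{\mathrm{K}_q^p(\lambda)}$, so $\widetilde{\T_\mu}\in\mathrm{K}_q^p(\lambda)\Rightarrow\widehat{\mu_r}\in\mathrm{K}_q^p(\lambda)\Rightarrow\T_\mu\in\mathcal{S}_{p,q}$ by (i), with no extra hypothesis needed. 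The reverse direction is the hard part: starting from
\begin{equation*}
\widetilde{\T_\mu}(z)=\frac{1}{\|B_z^\om\|_{A_\om^2}^2}\int_\D|B_z^\om(\xi)|^2\,d\mu(\xi)
\end{equation*}
and decomposing $\D$ into Bergman disks, I would dominate $\widetilde{\T_\mu}$ by a positive averaging operator $\widehat{\mu_r}\mapsto\sum_j\widehat{\mu_r}(a_j)\,W(z,a_j)$ whose kernel $W$ decays in the Bergman distance $\beta(z,a_j)$ according to the size estimates for $B_z^\om$ available when $\om\in\dD$. The crux, and the main obstacle, is to show that this kernel operator is bounded on the Herz space $\mathrm{K}_q^p(\lambda)$: this is exactly where the hypothesis $\frac{(1-|\cdot|)^{p}\hat{\om}(\cdot)^p}{(1-|\cdot|)^{2}}\in\dD$ enters, as it renders the radial weights produced by the kernel estimate summable across the annuli and lets a Schur-type test close in both the inner $l^p$ and outer $l^q$ indices. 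Once this boundedness is secured one gets $\|\widetilde{\T_\mu}\|_{\mathrm{K}_q^p(\lambda)}\lesssim\|\widehat{\mu_r}\|_{\mathrm{K}_q^p(\lambda)}$, and combining with the reverse inequality and part (i) yields both the stated equivalence and the final comparison $\|\T_\mu\|_{\mathcal{S}_{p,q}}\approx\|\widetilde{\T_\mu}\|_{\mathrm{K}_q^p(\lambda)}$.
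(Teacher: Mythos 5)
Your part (i) is essentially the paper's own argument: the paper also quotes the single-operator Schatten characterization (Theorem 3 of \cite{PjaRjSk2018jga}) applied to each slice $\mu\chi_k$, and combines it with the finite-band relations
$\widehat{\mu_r}\chi_k\leq\sum_{|j-k|\leq N}\widehat{(\mu\chi_j)_r}$ and
$\widehat{(\mu\chi_j)_r}\leq\sum_{|k-j|\leq N}\widehat{\mu_r}\chi_k$,
the bandedness coming from the same geometric fact that a Bergman disk $D(z,r)$ with $z\in\Lambda_k$ meets only boundedly many dyadic annuli; summing in $l^q$ (including the $\limsup$ case $q=0$) then gives all three equivalences. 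The easy half of (ii) also matches the paper: the pointwise bound $\widehat{\mu_r}\lesssim\widetilde{\T_\mu}$ from the kernel estimates, followed by (i).

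The genuine gap is in the hard half of (ii). You correctly identify that one must dominate $\widetilde{\T_\mu}$ by a positive operator acting on $\widehat{\mu_r}$ and then prove that operator bounded on $\mathrm{K}_q^p(\lambda)$, but you leave exactly that boundedness unproved (your ``crux, and the main obstacle''), and the single tool you name --- a Schur-type test --- cannot deliver it on the whole parameter range: Schur tests are a H\"older/duality device built on the conjugate exponent $p/(p-1)$ and have no meaning for $0<p<1$. The paper needs two genuinely different arguments. For $1\leq p\leq\infty$ it bounds $\widetilde{\T_\mu}\lesssim B_\om(\widehat{\mu_r})$ by subharmonicity of $|B_z^\om|^2$ and proves $B_\om$ bounded on $\mathrm{K}_q^p(\lambda)$ (its Lemmas 3 and 4): a Schur test with test functions $(1-|z|)^s$ gives boundedness of $B_\om$ on $L^p_\tau$ for $\tau=-2\pm\varepsilon$, playing the two signs of $\varepsilon$ against each other (plus a direct kernel estimate at $p=\infty$) yields the off-diagonal decay
$\|B_\om(f\chi_j)\|_{L^p_{\lambda\chi_k}}\lesssim 2^{-\varepsilon|j-k|/p}\|f\chi_j\|_{L^p_\lambda}$,
and Young's inequality in $l^q$ closes the argument; note that only $\om\in\dD$ is used here, not the extra hypothesis. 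For $0<p<1$ the paper instead discretizes $\mu$ over the lattice, uses subharmonicity of $|B_z^\om|^{2p}$ (not of $|B_z^\om|^{2}$) together with the subadditivity $(\sum_j a_j)^p\leq\sum_j a_j^p$, and estimates
$\int_{\Lambda_k}\big(\widetilde{\T_\mu}\big)^p d\lambda\lesssim\sum_m\xi_m^p\,2^{-\varepsilon|k-m|}$ with $\xi_m^p=\sum_{a_j\in\Lambda_m}\widehat{\mu_r}(a_j)^p$;
it is precisely here, through Lemma 2(i) and the verification that $\upsilon=(1-|\cdot|)^{p-2}\hat{\om}^p$ lies in $\R$, that the hypothesis $\upsilon\in\dD$ enters (so your attribution of the hypothesis to a Schur test is also misplaced). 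Your outline thus has the right skeleton, but the step you defer is the paper's main technical content, and the method you propose for it breaks down for $p<1$.
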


Throughout this paper, the letter $C$ will denote  constants and may differ from one occurrence to the other.
The notation $A \lesssim B$ means that there is a positive constant C such that $A\leq CB$.
The notation $A \approx B$ means $A\lesssim B$ and $B\lesssim A$.\msk

 \section{The proof of main result}
In this section, we prove the main result in this paper. For this purpose, let's recall some related definitions and state some lemmas.

For $j=0,1,2,\cdots$ and $k=0,1,2,\cdots,2^j-1$, let
$$I_{j,k}=\left\{e^{i\theta}:\frac{2\pi k}{2^j}\leq \theta<\frac{2\pi(k+1)}{2^j}\right\}$$
 and
 $$R_{j,k}=\left\{z\in\DD:\frac{z}{|z|}\in I_{j,k},1-\frac{|I|}{2\pi}\leq |z|<1-\frac{|I|}{4\pi}\right\}.$$

\begin{Lemma}\label{1105-1}
Suppose $0<s<r<\infty$ are given and $\{a_n\}_{n=1}^\infty$ is $s$-separated. Then the following statements hold.
\begin{enumerate}[(i)]
  \item For any $z\in\DD$, there exist at most $N_1=N_1(r)$ elements of $\{R_{j,k}\}$ intersect with $D(z,r)$;
  \item For any $R_{j,k}$, there exist at most $N_2=N_2(r,s)$ elements of $\{D(a_n,r)\}$ intersect with $R_{k,j}$.
\end{enumerate}
\end{Lemma}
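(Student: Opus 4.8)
The plan is to treat both statements as packing estimates in the Bergman metric, resting on two uniform facts. First, every tile $R_{j,k}$ has Bergman diameter bounded by an absolute constant $d_0$, independent of $j$ and $k$. Second, the M\"obius-invariant measure $\lambda$ assigns comparable mass to all the objects involved: since $D(z,r)=\vp_z(D(0,r))$ and $\lambda$ is M\"obius invariant, $\lambda(D(z,r))=\lambda(D(0,r))$ depends only on $r$, while $\lambda(R_{j,k})\approx 1$ uniformly in $j,k$. Granting these, both (i) and (ii) follow by comparing a sum of disjoint pieces of mass bounded below against the mass of one fixed Bergman disk.

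I expect the uniform diameter bound to be the only real work; everything else is bookkeeping. To prove it, recall that $\beta(z,w)=\tfrac12\log\frac{1+\rho(z,w)}{1-\rho(z,w)}$, where $\rho(z,w)=|\vp_z(w)|$ is the pseudo-hyperbolic distance, so it suffices to bound $\rho$ away from $1$ on each tile. For $z,w\in R_{j,k}$ one has $1-|z|,\,1-|w|\in[2^{-j-1},2^{-j})$, hence $1-|z|^2\approx 1-|w|^2\approx 2^{-j}$; moreover the angular separation of $z$ and $w$ is at most $2\pi\cdot 2^{-j}$, which together with $1-|z||w|\approx 2^{-j}$ gives $|1-z\ol{w}|\approx 2^{-j}$. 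Substituting into the identity $1-\rho(z,w)^2=\frac{(1-|z|^2)(1-|w|^2)}{|1-z\ol{w}|^2}$ yields $1-\rho(z,w)^2\approx 1$, so $\rho(z,w)$ is bounded away from $1$ and therefore $\beta(z,w)\leq d_0$ uniformly. The same comparison, integrated in polar coordinates, gives $\lambda(R_{j,k})\approx 1$.

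For (i), fix $z$ and suppose $R_{j,k}\cap D(z,r)\neq\emptyset$. By the diameter bound, $R_{j,k}\subset D(z,r+d_0)$. Since distinct tiles have pairwise disjoint interiors and each satisfies $\lambda(R_{j,k})\gtrsim 1$, summing their masses bounds the total by $\lambda(D(z,r+d_0))=\lambda(D(0,r+d_0))$, a constant depending only on $r$; dividing, the number of such tiles is at most some $N_1(r)$.

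For (ii), fix $R_{j,k}$, choose a point $z_0\in R_{j,k}$, and suppose $D(a_n,r)\cap R_{j,k}\neq\emptyset$. Then $\beta(a_n,z_0)\leq r+d_0$, so $a_n\in D(z_0,r+d_0)$ and consequently $D(a_n,s/2)\subset D(z_0,r+d_0+s/2)$. Because $\{a_n\}$ is $s$-separated, the disks $D(a_n,s/2)$ are pairwise disjoint and each satisfies $\lambda(D(a_n,s/2))=\lambda(D(0,s/2))\gtrsim c(s)>0$. Summing over the relevant indices and comparing with $\lambda(D(z_0,r+d_0+s/2))$, which depends only on $r$ and $s$, bounds the count by a constant $N_2(r,s)$, as claimed.
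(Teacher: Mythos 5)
Your proof is correct, and it is organized around a genuinely different counting mechanism than the paper's, although the two share one key geometric ingredient. The common core is the uniform bound on the Bergman diameter of the tiles $R_{j,k}$: the paper obtains it (inside its proof of part (ii)) by showing every $w\in R_{j,k}$ lies within bounded Bergman distance of the point $c_{j,j,k}=(1-2^{-j})e^{2\pi \mathrm{i} k/2^j}$, while you derive it from the identity $1-\rho(z,w)^2=\frac{(1-|z|^2)(1-|w|^2)}{|1-z\ol{w}|^2}$ together with the comparisons $1-|z|\approx 1-|w|\approx|1-z\ol{w}|\approx 2^{-j}$ valid on $R_{j,k}$; both derivations are sound. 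After that the routes diverge. For (i) the paper argues by direct counting: using the Euclidean center and radius of $D(z,r)$ (Zhu, Proposition 4.4) it shows the disk meets at most boundedly many annuli $\Lambda_j$, and within each such annulus only boundedly many sectors, since the angular width of $D(z,r)$ is comparable to $|I_{j,k}|\approx 1-|z|$; you instead run a packing argument, noting that every tile meeting $D(z,r)$ sits inside $D(z,r+d_0)$, that distinct tiles are disjoint with $\lambda(R_{j,k})\approx 1$, and that $\lambda(D(z,r+d_0))=\lambda(D(0,r+d_0))$ by M\"obius invariance of $\lambda$. For (ii) the two arguments are the same packing idea, except that the paper compares Euclidean areas of the disks around the $a_i$ inside a large Bergman disk (legitimate because all points involved have comparable distance to the boundary), whereas you compare invariant masses, which removes any need to track how Euclidean sizes of Bergman disks vary with the center. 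Your version is the more unified one --- a single measure-comparison principle handles both parts --- while the paper's part (i) is more elementary in that it uses only the explicit Euclidean geometry of Bergman disks. One further point in your favor: you pack the disks $D(a_n,s/2)$, which are genuinely pairwise disjoint under $s$-separation, whereas the paper's area estimate implicitly treats the disks $D(a_i,s)$ as disjoint, which strictly speaking requires $2s$-separation; your bookkeeping is the more careful of the two.
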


\begin{proof}{\it (i).} Suppose $z\in\Lambda_i$ and $w\in D(z,r)$.
Then we have $1-|z|\approx1-|w|$. So there exists $M=M(r)$ such that $w\in\cup_{m=i-M}^{i+M}\Lambda_m$.
Here, let $\Lambda_m=\O$ if $m<0$.
By Proposition 4.4 in \cite{zhu}, $D(z,r)$ is an Euclidean disk with center $C_0=\frac{1-(\tanh r)^2}{1-(\tanh r)^2|z|^2}z$  and radius
$R_0=\frac{1-|z|^2}{1-(\tanh r)^2|z|^2}\tanh r.$
So, for all $w\in D(z,r)$, we have
$$|\mbox{Arg} (\ol{z}w)| \leq \arcsin\frac{R_0}{C_0}\approx 1-|z|,\,\, \mbox{as}\,\,|z|\to 1. $$
When $j=i-M,\cdots,i+M$ and $k=1,2,\cdots,2^j-1$, $|I_{j,k}|\approx \frac{1}{2^i}\approx 1-|z|$.
So, there exists $N_1=N_1(r)$ such that there are at most $N_1$ elements of $\{R_{j,k}\}$ intersect with $D(z,r)$.

{\it (ii).}
Let $c_{t,j,k}=(1-\frac{1}{2^t})e^{\frac{2\pi\mathrm{i} k}{2^j}}$. For any $w\in R_{j,k}$, as $j\to\infty$, we have
\begin{align*}
\beta(c_{j,j,k},w)\leq \beta(c_{j,j,k},c_{j+1,j,k})+\beta(c_{j,j,k},c_{j,j,k+1})\lesssim 1.
\end{align*}
So, there exists $R^\p>0$, for all $j=0,1,2,\cdots$, $k=0,1,\cdots,2^j-1$ and $w\in R_{j,k}$, we have $\beta(c_{j,j,k},w)<R^\p$.

  Given $R_{j,k}$. Without loss of generality, assume $R_{j,k}$ intersect with $D(a_i,r)$ for $i=1,2,\cdots,N_{j,k}$.
 We have
$$1-|c_{j,j,k}|\approx 1-|a_i|,\,\,i=1,2,\cdots,N_{j,k}$$
and
$$\cup_{i=1}^{N_{j,k}} D(a_i,s) \subset \cup_{i=1}^{N_{j,k}} D(a_i,r)\subset D(c_{j,j,k}, R^\p+2r).$$
Thus
$$N_{j,k}\leq \frac{|D(c_{j,j,k},R^\p+2r)|}{\inf_{1\leq i\leq N_{j,k}}|D(a_i,s)|}\lesssim 1,$$
which implies the desired result.  The proof is complete.
\end{proof}

The following lemma is a main result in \cite{PjRj2016jmpa} and plays an  important role in the studying of Toeplitz operators on $A_\om^2$.

\begin{Lemma}\label{1104-3}
Let $0<p<\infty$ and  $\om\in\hD$. Then the following assertions hold.
\begin{enumerate}[(i)]
  \item
     $M_p^p(r, B_z^\om) \approx \int_{0}^{r|z|} \frac{1}{\hat{\om}(t)^p (1-t)^{p}}dt,$\,\,$r|z|\to 1$.
  \item  If $\upsilon\in\hD$, $\|B_z^\om\|_{A_\upsilon^p}^p \approx   \int_{0}^{|z|} \frac{\hat{\upsilon}(t)}{\hat{\om}(t)^p (1-t)^{p}}dt$,\,\,$|z|\to 1$.
\end{enumerate}
Here and hence forth, $M^p_p(r,B_z^\om)= \frac{1}{2\pi}\int_0^{2\pi}|B_z^\om(re^{i\theta})|^pd\theta $.
\end{Lemma}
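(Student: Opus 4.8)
The plan is to prove the two parts in sequence, deriving (ii) from (i), so that the real work lies in the integral-mean estimate (i). Throughout write $x=r|z|$ and observe that, by rotation invariance of the arc-length integral defining $M_p^p$, one may replace $\ol z$ by $|z|$: setting $h(u)=\frac12\sum_{k\ge0}u^k/\om_{2k+1}$, the substitution $\theta\mapsto\theta-\mathrm{Arg}\,z$ yields $M_p^p(r,B_z^\om)=M_p^p(x,h)$. This reduces the two-variable problem to estimating the integral means of a single power series $h$ with positive, increasing coefficients $c_k=1/(2\om_{2k+1})$.

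The first key step is a moment asymptotic for doubling weights: for $\om\in\hD$ one has $\om_s\asymp\hat\om(1-1/s)$ as $s\to\infty$. The lower bound is immediate from $\om_s\ge(1-1/s)^s\hat\om(1-1/s)$ together with $(1-1/s)^s\to e^{-1}$; the upper bound splits $\int_0^1 r^s\om(r)dr$ at $r=1-1/s$, bounds the inner part by $\hat\om(1-1/s)$, and controls the tail $\int_0^{1-1/s}r^s\om(r)dr$ by a dyadic decomposition in $1-r$ in which the doubling inequality $\hat\om(r)\lesssim\hat\om(\frac{1+r}{2})$ makes the resulting series geometric. Consequently $c_k\asymp1/\hat\om(1-1/k)$, and since $\hat\om$ is doubling these coefficients are comparable across each dyadic block $2^n\le k<2^{n+1}$, with common size $\asymp1/\hat\om(r_n)$, where $r_n=1-2^{-n}$.

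The second, and main, step is to pass from $M_p^p(x,h)$ to a sum over dyadic blocks. Decompose $h=\sum_{n\ge0}\Delta_n$ with $\Delta_n(u)=\frac12\sum_{2^n\le k<2^{n+1}}u^k/\om_{2k+1}$, and apply the Littlewood--Paley equivalence $M_p^p(x,h)\asymp\|(\sum_n|\Delta_n(x\,\cdot)|^2)^{1/2}\|_{L^p(\partial\D)}^p$, valid for all $0<p<\infty$. Using that within block $n$ the coefficients are comparable to $1/\hat\om(r_n)$ and that $\Delta_n(xe^{i\theta})$ is a Dirichlet-type polynomial whose modulus is $\asymp\hat\om(r_n)^{-1}\min(2^n,|\theta|^{-1})$ when $2^n(1-x)\lesssim1$ (blocks with $2^n(1-x)\gtrsim1$ being exponentially damped by the factor $x^{2^n}$), one evaluates the square-function integral and obtains $M_p^p(x,h)\asymp\sum_{n\le N}2^{n(p-1)}/\hat\om(r_n)^p$, where $N\asymp\log_2\frac1{1-x}$. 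Finally, the same dyadic decomposition applied to the target integral gives $\int_0^x\frac{dt}{\hat\om(t)^p(1-t)^p}\asymp\sum_{n\le N}2^{n(p-1)}/\hat\om(r_n)^p$, which matches and proves (i). The Littlewood--Paley step is where I expect the main difficulty: for $p<1$ there is no triangle inequality, so the square function (rather than naive summation of block norms) is essential, and the interplay between the Dirichlet-kernel estimates and the doubling growth of $1/\hat\om(r_n)$ must be tracked carefully to recover the exponent $2^{n(p-1)}$ uniformly in $p$.

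For (ii), I integrate the estimate from (i) against $\upsilon$. In polar coordinates $\|B_z^\om\|_{A_\upsilon^p}^p=2\int_0^1 M_p^p(\rho,B_z^\om)\upsilon(\rho)\rho\,d\rho$, and substituting (i) gives, after Fubini, $\int_0^{|z|}\frac{1}{\hat\om(t)^p(1-t)^p}\big(\int_{t/|z|}^1\upsilon(\rho)d\rho\big)dt=\int_0^{|z|}\frac{\hat\upsilon(t/|z|)}{\hat\om(t)^p(1-t)^p}dt$. Since $\upsilon\in\hD$ and $t/|z|\to t$ as $|z|\to1$, the doubling property yields $\hat\upsilon(t/|z|)\asymp\hat\upsilon(t)$ on the relevant range, while the contribution of the bounded region where $\rho|z|$ stays away from $1$ is of lower order; this produces the claimed equivalence $\|B_z^\om\|_{A_\upsilon^p}^p\asymp\int_0^{|z|}\frac{\hat\upsilon(t)}{\hat\om(t)^p(1-t)^p}dt$ as $|z|\to1$.
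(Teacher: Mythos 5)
Your reduction to the single power series $h(u)=\frac12\sum_k u^k/\om_{2k+1}$, the moment estimate $\om_s\asymp\hat{\om}(1-1/s)$ with its dyadic proof, and the derivation of (ii) from a uniform version of (i) are all sound (for (ii) you still owe the routine treatment of $t$ near $|z|$, where $\hat{\upsilon}(t/|z|)\not\asymp\hat{\upsilon}(t)$, but that is fixable). The genuine gap is the step you yourself flagged as the crux: the sharp-block Littlewood--Paley equivalence $M_p^p(x,h)\asymp\|(\sum_n|\Delta_n(x\cdot)|^2)^{1/2}\|_{L^p}^p$ is a theorem only for $1<p<\infty$; for $0<p\le 1$ it is false (partial-sum projections are unbounded on $H^p$; only smooth frequency cutoffs characterize $H^p$ there), and it fails for exactly the kernels at hand. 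Take $\om\equiv 1$, so $\hat{\om}(t)=1-t$, $c_k=k+1$, $h(u)=(1-u)^{-2}$, and fix $\tfrac12<p<1$, $x=1-2^{-N}$. Then
\begin{align*}
M_p^p(x,h)\asymp\int_0^\pi\frac{d\theta}{\big((1-x)+\theta\big)^{2p}}\asymp 2^{N(2p-1)},
\end{align*}
in agreement with the lemma; but your own envelope $|\Delta_n(xe^{i\theta})|\asymp 2^n\min(2^n,|\theta|^{-1})$ gives $S(\theta)\asymp 2^{N+m}$ for $|\theta|\asymp 2^{-m}$, $m\le N$, hence $\|S\|_{L^p}^p\asymp 2^{Np}\sum_{m\le N}2^{m(p-1)}\asymp 2^{Np}\gg 2^{N(2p-1)}$. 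For $p<1$ the square function is strictly larger in order of magnitude than $M_p$: the $L^p$ quasinorm of a sharp block lives at $|\theta|\asymp 1$, where each block decays only like $|\theta|^{-1}$, while the full kernel enjoys cancellation \emph{across} blocks and decays like $|\theta|^{-2}$. In particular your claimed output $\sum_{n\le N}2^{n(p-1)}/\hat{\om}(r_n)^p$ is inconsistent with your own block estimates when $p<1$, so the two errors would have to cancel. The small-$p$ case is not peripheral here: the paper applies part (i) with exponent $2p$ in the case $p<1$ of Theorem \ref{1103-1}(ii), so exponents at and below $1$ are genuinely needed.

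For comparison: the paper itself offers no proof of this lemma; it quotes it from \cite{PjRj2016jmpa}, where the two-sided estimate is proved for \emph{all} $0<p<\infty$ by exploiting the positivity of the Maclaurin coefficients $1/(2\om_{2k+1})$ and their comparability on dyadic blocks, via the moment asymptotics you proved together with Mateljevi\'c--Pavlovi\'c-type $L^p$ estimates for power series with nonnegative coefficients --- a mechanism based on positivity rather than orthogonality, which is precisely what survives below $p=1$. Your outline is salvageable on $1<p<\infty$, with one repair: the pointwise two-sided claim $|\Delta_n(xe^{i\theta})|\asymp\hat{\om}(r_n)^{-1}\min(2^n,|\theta|^{-1})$ cannot hold as stated (these modulated Dirichlet-type sums vanish on large sets); you must replace it by two-sided bounds for the $L^2$-means of $|\Delta_n|$ over dyadic $\theta$-intervals, after which the classical Littlewood--Paley theorem does return $\sum_{n\le N}2^{n(p-1)}\hat{\om}(r_n)^{-p}$ and the tail blocks are indeed killed by $x^{2^n}$ against the at-most-exponential growth of $1/\hat{\om}(r_n)$. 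But for $0<p\le 1$ the square-function route must be abandoned, not refined, and an argument using positivity (as in the cited source) substituted; naive block-by-block subadditivity $M_p^p(h)\le\sum_n M_p^p(\Delta_n)$ is likewise too lossy there, since a single block already has $M_p^p(\Delta_n)\asymp\hat{\om}(r_n)^{-p}\gg 2^{n(p-1)}\hat{\om}(r_n)^{-p}$.
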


For any measurable function $f$, let
$$B_\om f(z)=\frac{1}{\|B_z^\om\|_{A_\om^2}^2}\int_{\DD} f(w)|B_z^\om(w)|^2\frac{\hat{\om}(w)dA(w)}{1-|w|}.$$
Then we have the following lemma.

\begin{Lemma}\label{1023-1} Let $\om\in\dD$ and $1\leq p<\infty$. Then there exists $\varepsilon>0$ such that
$B_\om:L^p_\tau\to L^p_\tau$ is bounded  when $\tau=-2\pm\varepsilon$.
\end{Lemma}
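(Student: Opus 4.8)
The plan is to prove boundedness by a Schur test, since $B_\om$ is an integral operator with positive kernel. Writing $L^p_\tau=L^p((1-|w|)^\tau dA)$ and $d\mu_\tau(w)=(1-|w|)^\tau dA(w)$, we have $B_\om f(z)=\int_\DD K(z,w)f(w)\,d\mu_\tau(w)$ with
$$K(z,w)=\frac{|B_z^\om(w)|^2}{\|B_z^\om\|_{A_\om^2}^2}\,\frac{\hat\om(w)}{(1-|w|)^{1+\tau}}\ge0.$$
Before testing I would record two facts that make the whole computation go through. First, setting $W(w)=\hat\om(w)/(1-|w|)$, the hypothesis $\om\in\dD$ gives $W\in\hD$ with $\hat W\approx\hat\om$: the upper bound $\hat W\lesssim\hat\om$ follows from reverse doubling and the lower bound $\hat W\gtrsim\hat\om$ from doubling. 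Consequently Lemma \ref{1104-3}(ii) (with $\upsilon=W$) yields $\|B_z^\om\|_{A_\om^2}^2\approx\int_0^{|z|}\frac{dt}{\hat\om(t)(1-t)^2}$, and in particular $B_\om 1\approx 1$, so $B_\om$ is morally an averaging operator. Second, from $\om\in\dD$ one has two-sided bounds $(1-r)^\beta\lesssim\hat\om(r)\lesssim(1-r)^\gamma$ for some $0<\gamma\le\beta$, together with the concentration estimate
$$\int_0^r\frac{(1-t)^{a}}{\hat\om(t)(1-t)^2}\,dt\approx\frac{(1-r)^{a-1}}{\hat\om(r)},\qquad r\to1^-,$$
valid for $a$ in a range determined by $\gamma,\beta$; in particular $1/\|B_z^\om\|_{A_\om^2}^2\approx\hat\om(|z|)(1-|z|)$.

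For $1<p<\infty$ I would run the Schur test (see \cite{zhu}) with the test function $h(w)=(1-|w|)^{-c}$ for a small $c>0$ to be fixed. The two required inequalities are
$$\int_\DD K(z,w)h(w)^{p'}d\mu_\tau(w)\lesssim h(z)^{p'}\quad\text{and}\quad\int_\DD K(z,w)h(z)^{p}d\mu_\tau(z)\lesssim h(w)^{p}.$$
In the first, all powers of $(1-|w|)$ coming from $\tau$ cancel, and testing $|B_z^\om(w)|^2$ against the weight $\upsilon_1(w)=\hat\om(w)(1-|w|)^{-1-cp'}$ the left side equals $\|B_z^\om\|_{A_\om^2}^{-2}\|B_z^\om\|_{A_{\upsilon_1}^2}^2$; provided $\upsilon_1\in\hD$ (so that $\hat\upsilon_1(r)\approx\hat\om(r)(1-r)^{-cp'}$), Lemma \ref{1104-3}(ii) and the concentration estimate collapse this to $(1-|z|)^{-cp'}=h(z)^{p'}$. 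In the second inequality I would first use $|B_z^\om(w)|=|B_w^\om(z)|$, then replace $1/\|B_z^\om\|_{A_\om^2}^2$ by $\hat\om(|z|)(1-|z|)$, so that the $z$-integral becomes $\|B_w^\om\|_{A^2_{\upsilon_2}}^2$ with $\upsilon_2(z)=\hat\om(z)(1-|z|)^{-cp-1+\theta}$, where $\tau=-2+\theta$ and $\theta=\pm\varepsilon$; Lemma \ref{1104-3}(ii) and concentration again collapse the left side to exactly $(1-|w|)^{-cp}=h(w)^p$. The case $p=1$ I would treat directly: $L^1_\tau$-boundedness is equivalent to $\sup_w\int_\DD K(z,w)d\mu_\tau(z)<\infty$, which is precisely the second estimate with $c=0$ and therefore already covered.

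The main obstacle is the pair of auxiliary facts everything rests on: that $\upsilon_1$ and $\upsilon_2$ remain doubling weights with $\hat\upsilon_i(r)\approx\hat\om(r)(1-r)^{(\cdots)}$, and that the concentration estimate holds for the exponents that arise. Both hinge on $\om\in\dD$: reverse doubling supplies the polynomial upper bound $\hat\om(r)\lesssim(1-r)^\gamma$, which is exactly what lets us divide by the extra negative powers $(1-|w|)^{-1-cp'}$ and $(1-|w|)^{-cp-1+\theta}$ and still keep an integrable, doubling weight, while doubling supplies the matching lower bound and the lower estimate in the concentration. The admissible exponents must stay inside the window dictated by $\gamma$ and $\beta$, and this is precisely what forces $c$ and $\varepsilon=|\theta|$ to be small; choosing $\varepsilon$ first, small relative to $\gamma$, and then $c$ with $c\max\{p,p'\}<\gamma-\varepsilon$, makes both Schur conditions hold simultaneously for $\tau=-2\pm\varepsilon$, and in fact for every $\tau$ with $|\tau+2|\le\varepsilon$.
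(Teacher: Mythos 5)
Your proof is correct and takes essentially the same route as the paper's: Schur's test with a power-type test function for $1<p<\infty$ (the paper uses $h(z)=(1-|z|)^{s}$ with $s>0$, you use $(1-|z|)^{-c}$; the sign difference is immaterial), resting on Lemma \ref{1104-3}(ii) together with the polynomial growth/decay bounds for $\hat{\om}$ supplied by $\om\in\dD$, with the $p=1$ case reduced to the same Fubini-type integral bound the paper computes directly. The only packaging difference is that you fold the paper's inline monotone-quotient computations into auxiliary doubling weights $\upsilon_1,\upsilon_2$ and a concentration estimate, which is the same substance.
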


\begin{proof}
Since $\om\in\dD$, by Lemmas  A and B in \cite{PjaRjSk2018jga},  there are constants $0<a<b<\infty$ such that
  \begin{align}\label{0515-1}
  \frac{{\hat{\om}}(t)}{(1-t)^b} \nearrow\infty
\,\,\mbox{ and }\,\,\frac{{\hat{\om}}(t)}{(1-t)^a}\searrow 0,\,\,\mbox{ when }\,\,0\leq t<1 .
\end{align}
Suppose $0<\varepsilon<a$. Then $(1-|\cdot|)^{-1\pm\varepsilon}\hat{\om}(\cdot)\in\R$.

We only prove the case of  $\tau=-2-\varepsilon$. The case of $\tau=-2+\varepsilon$ can be proved in the same way.

Suppose $p=1$. By  Lemma \ref{1104-3},
\begin{align}
B_z^\om(z)=\|B_z^\om\|_{A_\om^2}^2 \approx\frac{1}{(1-|z|)\hat{\om}(z)},\,\, \,\, z\in\DD.\label{1025-1}
\end{align}
  For all $f\in L_{\tau}^p$, by Fubini's Theorem, (\ref{1025-1}) and Lemma \ref{1104-3} {\it (ii)}, we have
\begin{align*}
\|B_\om f\|_{L_\tau^1}
&=\int_\DD \frac{1}{\|B_z^\om\|_{A_\om^2}^2} \left|\int_\DD f(w)|B_z^\om(w)|^2\frac{\hat{\om}(w)dA(w)}{1-|w|}\right|(1-|z|^2)^{-2-\varepsilon} dA(z)   \\
&\lesssim \int_\DD |f(w)|\frac{\hat{\om}(w)dA(w)}{1-|w|}\int_\DD   |B_w^\om(z)|^2 (1-|z|)^{-1-\varepsilon} \hat{\om}(z)dA(z)\\
&\lesssim \int_\DD |f(w)|\frac{\hat{\om}(w)dA(w)}{1-|w|}\int_0^{\frac{|w|+1}{2}} \frac{1}{(1-t)^{2+\varepsilon}\hat{\om}(t)} dt.
\end{align*}
Since $0<\varepsilon<a$,
\begin{align*}
\int_0^{\frac{|w|+1}{2}} \frac{1}{(1-t)^{2+\varepsilon}\hat{\om}(t)} dt
\lesssim \frac{(1-|w|)^a}{\hat{\om}(w)}\int_0^{\frac{|w|+1}{2}} \frac{1}{(1-t)^{2+a+\varepsilon}} dt
\lesssim \frac{1}{(1-|w|)^{1+\varepsilon}\hat{\om}(w)}.
\end{align*}
So,  $\|B_\om f\|_{L_\tau^p}^p\lesssim  \| f\|_{L_\tau^p}^p$ when $p=1$.

Suppose $1<p<\infty$. Let $p^\p=\frac{p}{p-1}$, $h(z)=(1-|z|)^s$ with $0<s<\min\{\frac{a}{p^\p},\frac{a}{p}\}$. Set
$$H(z,w)=\frac{|B_z^\om(w)|^2\hat{\om}(w)}{\|B_z^\om\|_{A_\om^2}^2(1-|w|)(1-|w|^2)^{-2-\varepsilon}}.$$
Then $ B_\om f(z)=\int_\DD f(w)H(z,w)dA_{\tau}(w) $. On one hand,
\begin{align*}
\int_\DD H(z,w)h(w)^{p^\p}dA_{\tau}(w)
&=\int_\DD  \frac{|B_z^\om(w)|^2\hat{\om}(w)(1-|w|)^{p^\p s-1}}{\|B_z^\om\|_{A_\om^2}^2}dA(w)\\
&\lesssim (1-|z|)\hat{\om}(z)\int_0^\frac{|z|+1}{2}\frac{1}{(1-t)^{2-p^\p s}\hat{\om}(t)}dt  \\
&\lesssim \frac{(1-|z|)^{1+a}\hat{\om}(z)}{\hat{\om}(z)}\int_0^\frac{|z|+1}{2}\frac{1}{(1-t)^{2+a-p^\p s}}dt\\
&\lesssim h(z)^{p^\p}.
\end{align*}
On the other hand,
\begin{align*}
\int_\DD H(z,w)h(z)^pdA_{\tau}(z)
&\approx \int_\DD \frac{|B_z^\om(w)|^2\hat{\om}(w)(1-|z|)^{-2-\varepsilon+ps}}{\|B_z^\om\|_{A_\om^2}^2(1-|w|^2)^{-1-\varepsilon}}dA(z)  \\
&\approx \frac{\hat{\om}(w)}{(1-|w|)^{-1-\varepsilon}}\int_\DD |B_w^\om(z)|^2(1-|z|)^{-1-\varepsilon+ps}\hat{\om}(z)dA(z)\\
&\lesssim  \frac{\hat{\om}(w)}{(1-|w|)^{-1-\varepsilon}}\int_0^\frac{|w|+1}{2} \frac{1}{(1-t)^{2+\varepsilon-ps}\hat{\om}(t)} dt\\
&\lesssim h(w)^{p}.
\end{align*}
By Schur's test, see \cite[Theorem 3.6]{zhu} for example, we have that $B_\om:L^p_\tau\to L^p_\tau$ is bounded when $1<p<\infty$.
  The proof is complete.
\end{proof}

\begin{Lemma}\label{1025-2}
Suppose $\om\in\dD$, $1\leq p\leq \infty$ and $0\leq q\leq \infty$. Then $B_\om$ is bounded on $\mathrm{K}_q^p(\lambda)$.
\end{Lemma}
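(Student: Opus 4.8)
The plan is to exploit the dyadic annular structure of the Herz norm. Writing $c_m=\|f\|_{L^p_{\lambda\chi_m}}=\|f\chi_m\|_{L^p_\lambda}$, we have $\|f\|_{\mathrm{K}_q^p(\lambda)}=\|\{c_m\}_m\|_{l^q}$, and since $B_\om f=\sum_m B_\om(f\chi_m)$, the whole estimate reduces to controlling the block norms $\|\chi_k B_\om(f\chi_m)\|_{L^p_\lambda}$ and summing them. Accordingly, I would first establish, with the same $\varepsilon>0$ as in Lemma \ref{1023-1} and an implied constant independent of $k,m$, the geometric off-diagonal decay
$$\|\chi_k B_\om(f\chi_m)\|_{L^p_\lambda}\lesssim 2^{-\varepsilon|k-m|/p}\,\|f\chi_m\|_{L^p_\lambda},\qquad 1\le p<\infty.$$

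The mechanism for this decay is the comparison of $d\lambda=(1-|z|^2)^{-2}dA$ with $(1-|z|^2)^{-2\pm\varepsilon}dA$ on a single annulus: since $1-|z|\approx 2^{-j}$ on $\Lambda_j$, one has $\|g\chi_j\|_{L^p_\lambda}\approx 2^{\pm\varepsilon j/p}\|g\chi_j\|_{L^p_{-2\pm\varepsilon}}$. For $k>m$ I would pass to the weight $\tau=-2-\varepsilon$, enlarge the domain by dropping the cutoff $\chi_k$, apply the boundedness of $B_\om$ on $L^p_{-2-\varepsilon}$ from Lemma \ref{1023-1}, and convert back on $\Lambda_m$:
\begin{align*}
\|\chi_k B_\om(f\chi_m)\|_{L^p_\lambda}
&\approx 2^{-\varepsilon k/p}\|\chi_k B_\om(f\chi_m)\|_{L^p_{-2-\varepsilon}}
\le 2^{-\varepsilon k/p}\|B_\om(f\chi_m)\|_{L^p_{-2-\varepsilon}}\\
&\lesssim 2^{-\varepsilon k/p}\|f\chi_m\|_{L^p_{-2-\varepsilon}}
\approx 2^{-\varepsilon(k-m)/p}\|f\chi_m\|_{L^p_\lambda}.
\end{align*}
For $k<m$ the symmetric computation with the other endpoint $\tau=-2+\varepsilon$ produces the factor $2^{-\varepsilon(m-k)/p}$. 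Thus the two signs in Lemma \ref{1023-1} furnish decay in the two radial directions separately, which is precisely why both $\tau=-2\pm\varepsilon$ are needed; the border case $\tau=-2$ itself is never used.

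Granting the block decay, the conclusion follows from a discrete convolution estimate. With $a_j=2^{-\varepsilon|j|/p}$ and the triangle inequality in $L^p_\lambda$ (valid since $p\ge1$),
$$\|B_\om f\|_{L^p_{\lambda\chi_k}}\le\sum_m\|\chi_k B_\om(f\chi_m)\|_{L^p_\lambda}\lesssim\sum_m a_{k-m}c_m=(a*c)_k,$$
so applying Young's inequality for $l^q$ (in the form $\|a*c\|_{l^q}\le\|a\|_{l^1}\|c\|_{l^q}$ when $1\le q\le\infty$, the $q$-subadditivity variant $\|a*c\|_{l^q}\le\|a\|_{l^q}\|c\|_{l^q}$ when $0<q<1$, and an elementary $\limsup$ estimate when $q=0$) yields $\|B_\om f\|_{\mathrm{K}_q^p(\lambda)}\lesssim\|f\|_{\mathrm{K}_q^p(\lambda)}$, since the geometric sequence $a$ lies in every $l^s$.

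Finally, the endpoint $p=\infty$ is not covered by Lemma \ref{1023-1} and I would treat it directly through the positive kernel $K(z,w)=\|B_z^\om\|_{A_\om^2}^{-2}\,|B_z^\om(w)|^2\,\hat{\om}(w)/(1-|w|)$. Here $\|g\|_{L^\infty_{\lambda\chi_k}}=\mathrm{ess\,sup}_{\Lambda_k}|g|$, so $|B_\om(f\chi_m)(z)|\le\|f\chi_m\|_{L^\infty}\,B_\om(\chi_m)(z)$ and everything reduces to the pointwise bound $B_\om(\chi_m)(z)\lesssim 2^{-\varepsilon|k-m|}$ for $z\in\Lambda_k$, after which the same convolution argument closes the case. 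This last bound is a direct consequence of the mean estimate in Lemma \ref{1104-3}{\it (i)}, the norm asymptotics $\|B_z^\om\|_{A_\om^2}^2\approx 1/((1-|z|)\hat{\om}(z))$ from (\ref{1025-1}), and the growth relations (\ref{0515-1}). I expect the genuine work to sit in two places: the annulus weight-comparison bookkeeping that converts the two-sided $L^p_\tau$ bounds of Lemma \ref{1023-1} into geometric block decay (the heart of the argument), and the kernel integral for the $p=\infty$ endpoint; the discrete summation step is routine.
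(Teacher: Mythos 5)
Your proposal is correct and follows essentially the same route as the paper's proof: the same annulus weight-comparison that converts the $L^p_\tau$-boundedness of Lemma \ref{1023-1} (with $\tau=-2\pm\varepsilon$ chosen according to the sign of $k-m$) into geometric block decay for $1\le p<\infty$, the same direct positive-kernel estimate built from Lemma \ref{1104-3} {\it (i)}, (\ref{1025-1}) and (\ref{0515-1}) for the endpoint $p=\infty$, and the same discrete convolution step (Young's inequality for $1\le q\le\infty$, $q$-subadditivity for $0<q<1$, a $\limsup$ splitting for $q=0$). No gaps beyond the kernel computation you already flag as the real work, which the paper carries out exactly as you outline.
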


\begin{proof}
When $1\leq p<\infty$,   choose  $\varepsilon>0$ such that Lemma \ref{1023-1} holds.
For  $f\in \mathrm{K}_q^p(\lambda)$, by Lemma \ref{1023-1}, when $\tau=-2\pm\varepsilon$, for $j=0,1,2\cdots$,  we have
\begin{align*}
\|B_\om  ( f\chi_j )\|_{L_{\lambda \chi_k}^p}^p
&=\int_{1-\frac{1}{2^k}\leq |z|<1-\frac{1}{2^{k+1}}} |B_\om(f\chi_j)(z)|^p\frac{dA(z)}{(1-|z|^2)^{2}}   \\
&\approx 2^{k(\tau+2)}\int_{1-\frac{1}{2^k}\leq |z|<1-\frac{1}{2^{k+1}}} |B_\om(f\chi_j)(z)|^p dA_\tau(z)  \\
&\lesssim 2^{k(\tau+2)}\int_{\DD} |f(z)\chi_j(z)|^p dA_\tau(z)    \approx 2^{(\tau+2)(k-j)}\|f\chi_j\|_{L_\lambda^p}^p.
\end{align*}
When $j\leq k$, letting $\tau=-2-\varepsilon$, we get
$$\|B_\om  ( f\chi_j )\|_{L_{\lambda \chi_k}^p}^p\lesssim 2^{-\varepsilon |j-k|} \|f\chi_j\|_{L_\lambda^p}^p.$$
When $j>k$, letting $\tau=-2+\varepsilon$, we obtain
$$\|B_\om  ( f\chi_j )\|_{L_{\lambda \chi_k}^p}^p\lesssim 2^{-\varepsilon |j-k|} \|f\chi_j\|_{L_\lambda^p}^p.$$
Therefore, when $1\leq p<\infty$,
\begin{align}\label{1104-8}
\|(B_\om f)\chi_k\|_{L_\lambda^p}
=\left\|B_\om \left(\sum_{j=0}^\infty f\chi_j\right)\right\|_{L_{\lambda \chi_k}^p}
\leq \sum_{j=0}^\infty \|B_\om  ( f\chi_j )\|_{L_{\lambda \chi_k}^p}
\lesssim \sum_{j=0}^\infty \frac{\|f\chi_j\|_{L_\lambda^p}}{2^\frac{\varepsilon|j-k|}{p}}.
\end{align}

When $p=\infty$,
\begin{align}\label{1112-1}
\|(B_\om f)\chi_k\|_{L_\lambda^\infty}
=\|B_\om f\|_{L_{\lambda \chi_k}^\infty}
&\leq \left(\sum_{j=0}^k +\sum_{j=k+1}^\infty \right) \|B_\om  ( f\chi_j )\|_{L_{\lambda \chi_k}^\infty}.
\end{align}
After a calculation,
\begin{align}
\|B_\om  ( f\chi_j )\|_{L_{\lambda \chi_k}^\infty}
= &\sup_{1-\frac{1}{2^k}\leq |z|<1-\frac{1}{2^{k+1}}}\frac{1}{\|B_z^\om\|_{A_\om^2}^2}\int_{1-\frac{1}{2^j}\leq|w|<1-\frac{1}{2^{j+1}}}
      |f(w)||B_z^\om(w)|^2\frac{\hat{\om}(w)dA(w)}{1-|w|}  \nonumber\\
\lesssim& \|f\|_{L_{\lambda \chi_j}^\infty} \frac{1}{2^{k}}\hat{\om}(1-\frac{1}{2^k})\int_{1-\frac{1}{2^j}}^{1-\frac{1}{2^{j+1}}}\frac{\hat{\om}(r)dr}{1-r}
\int_0^{(1-\frac{1}{2^{j+1}})(1-\frac{1}{2^{k+1}})}  \frac{1}{(1-t)^2 \hat{\om}(t)^2}dt  \nonumber\\
\approx&  \|f\|_{L_{\lambda \chi_j}^\infty} \frac{1}{2^{k}}\hat{\om}(1-\frac{1}{2^{k+1}})  \hat{\om}(1-\frac{1}{2^{j+1}})
\int_0^{x_{j,k}}  \frac{1}{(1-t)^2 \hat{\om}(t)^2}dt.  \label{1104-6}
\end{align}
Here $x_{j,k}=(1-\frac{1}{2^{j+1}})(1-\frac{1}{2^{k+1}})$.

By (\ref{0515-1}),
\begin{align}
\int_0^{x_{j,k}}  \frac{1}{(1-t)^2 \hat{\om}(t)^2}dt
\lesssim&
\frac{(1-x_{j,k})^{2a} }{\hat{\om}(x_{j,k})^2}
 \int_0^{x_{j,k}}  \frac{1}{(1-t)^{2+2a}}dt  \nonumber\\
\lesssim &
\frac{(1-x_{j,k})^{-1}  }{\hat{\om}(x_{j,k})^2}
   = \frac{\left(1-(1-\frac{1}{2^{k+1}})(1-\frac{1}{2^{j+1}})\right)^{-1}  }{\left(\hat{\om}((1-\frac{1}{2^{k+1}})(1-\frac{1}{2^{j+1}}))\right)^2}.  \label{1104-7}
\end{align}
Let $r_j=\frac{1}{2^{j+1}}$. Since
$\frac{1-\left(1-\frac{1}{2^{j+1}}-\frac{1}{2^{k+1}}\right)}{1-(1-\frac{1}{2^{k+1}})(1-\frac{1}{2^{j+1}})}\approx 1,$
by  (\ref{1104-6}) and (\ref{1104-7})  we have
\begin{align*}
\|B_\om  ( f\chi_j )\|_{L_{\lambda \chi_k}^\infty}
&\lesssim \|f\|_{L_{\lambda \chi_j}^\infty}
\frac{\hat{\om}(1-\frac{1}{2^{k+1}})\hat{\om}(1-\frac{1}{2^{j+1}})}  {\left(1+2^{k-j}\right)\left(\hat{\om}(1-\frac{1}{2^{k+1}}-\frac{1}{2^{j+1}})\right)^2}\\
&=\|f\|_{L_{\lambda \chi_j}^\infty}
\frac{\hat{\om}(1-r_k)\hat{\om}(1-r_j)}  {\left(1+2^{k-j}\right)\hat{\om}(1-r_k-r_j)^2}.
\end{align*}
Therefore, when $j\geq k$, using the monotonicity of $\hat{\om}$, we have
\begin{align}
\|B_\om ( f\chi_j )\|_{L_{\lambda \chi_k}^\infty}
&\lesssim \|f\|_{L_{\lambda \chi_j}^\infty}\frac{\hat{\om}(1-r_j)}  {\hat{\om}(1-r_k-r_j)}\nonumber\\
&=\|f\|_{L_{\lambda \chi_j}^\infty}
\frac{\frac{\hat{\om}(1-r_j)}{\left(1-(1-r_j)\right)^{a}}}
{\frac{\hat{\om}(1-r_k-r_j)}{\left(1-(1-r_k-r_j)\right)^{a}}}
\frac{r_j^{a}}{(r_k+r_j)^{a}}
\lesssim \frac{\|f\|_{L_{\lambda \chi_j}^\infty}}{2^{a(j-k)}}.\label{1031-1}
\end{align}
When $j<k$, it is obvious that
\begin{align}\label{1031-2}
\|B_\om  ( f\chi_j )\|_{L_{\lambda \chi_k}^\infty}\lesssim \frac{\|f\|_{L_{\lambda \chi_j}^\infty}}{2^{k-j}}.
\end{align}
From  (\ref{1112-1}), (\ref{1031-1}) and (\ref{1031-2}), we have
\begin{align}\label{1112-2}
\|(B_\om f)\chi_k\|_{L_\lambda^\infty}
\leq \sum_{j=0}^\infty  \frac{\|f\chi_j\|_{L_\lambda^\infty}}{2^{{\min\{1,a\}}|j-k|}}.
\end{align}

By (\ref{1104-8}) and (\ref{1112-2}),  there exists $\varepsilon_0>0$ such that
\begin{align*}
\|(B_\om f)\chi_k\|_{L_\lambda^p}
&\lesssim
\sum_{j=0}^\infty \frac{\|f\chi_j\|_{L_\lambda^p}}{2^{\varepsilon_0|j-k|}},
\end{align*}
when $1\leq p\leq \infty$.

When $1\leq q\leq \infty$, we consider the sequences $X=\{x_k\}$, $Y=\{y_k\}$,  where $x_k=2^{-\varepsilon_0 |k|}$ and
$$y_k=\left\{
\begin{array}{cc}
  \|f\chi_k\|_{L_\lambda^p}, & k\geq 0, \\
  0, & k<0.
\end{array}
\right.
$$
Then $\|(B_\om f)\chi_k\|_{L_\lambda^p}\lesssim X*Y(k)$. By Young's inequality,
\begin{align*}\|B_\om f\|_{\mathrm{K}_q^p(\lambda)}
&=\|\{\|(B_\om f)\chi_k\|_{L^p_{\lambda}}\}_{k=0}^\infty\|_{l^q}  \\
&\lesssim  \left\|\left\{   \sum_{j=0}^\infty \frac{\|f\chi_j\|_{L_\lambda^p}}{2^{\varepsilon_0|j-k|}}   \right\}_{k=0}^\infty\right\|_{l^q}
=\|X*Y\|_{l^q}\lesssim \|X\|_{l^1}\|Y\|_{l^q}\approx \|f\|_{\mathrm{K}_q^p(\lambda)}.
\end{align*}

When $0<q<1$,
\begin{align*}
\|B_\om f\|_{K_q^p(\lambda)}^q
&=\sum_{k=0}^\infty\left( \|(B_\om f)\chi_k\|_{L_\lambda^p} \right)^q
 \lesssim \sum_{k=0}^\infty\left( \sum_{j=0}^\infty \frac{\|f\chi_j\|_{L_\lambda^p}}{2^{\varepsilon_0|j-k|}}\right)^q\\
 &\leq \sum_{k=0}^\infty \sum_{j=0}^\infty \frac{\|f\chi_j\|_{L_\lambda^p}^q}{2^{q\varepsilon_0|j-k|}} \leq \sum_{j=0}^\infty \|f\chi_j\|_{L^p_\lambda}^q  \sum_{k=-\infty}^\infty 2^{-q\varepsilon_0 |k|} \approx \|f\|_{\mathrm{K}_q^p(\lambda)}^q.
\end{align*}

If $q=0$, for any given $M\in\N$,
\begin{align*}
\|B_\om f\|_{K_q^p(\lambda)}
&=\limsup_{k\to\infty}\|(B_\om f)\chi_k\|_{L_\lambda^p}
\lesssim \limsup_{k\to\infty} \sum_{j=0}^\infty \frac{\|f\chi_j\|_{L_\lambda^p}}{2^{\varepsilon_0|j-k|}}   \\
&= \limsup_{k\to\infty} \left(\sum_{j=0}^M \frac{\|f\chi_j\|_{L_\lambda^p}}{2^{\varepsilon_0|j-k|}}
+ \sum_{j=M+1}^\infty \frac{\|f\chi_j\|_{L_\lambda^p}}{2^{\varepsilon_0|j-k|}}\right)
\lesssim \sup_{j\geq M+1} \|f\chi_j\|_{L_\lambda^p}.
\end{align*}
Letting $M\to\infty$, we have  $\|B_\om f\|_{K_q^p(\lambda)}\lesssim \|f\|_{K_q^p(\lambda)}$.
The proof is complete.
\end{proof}

\noindent{\bf Proof of Theorem \ref{1103-1}.}  ({\it i}).
By Lemma \ref{1105-1}, for any $z\in \Lambda_k$,  there exists $N=N(r)$ such that  $D(z,r)\subset\cup_{j=k-N}^{k+N} \Lambda_j$. Here  $\Lambda_j=\O$ if $j<0$.
Thus,
$$(\widehat{\mu_r}\chi_k)(z)\leq \sum_{j=k-N}^{k+N}\frac{\mu(\Lambda_j\cap D(z,r))}{(1-|z|)\hat{\om}(z)}
=\sum_{j=k-N}^{k+N}\widehat{(\mu\chi_j)_r}(z) $$
and
$$
\widehat{(\mu\chi_j)_r}(z)=\frac{\mu(\Lambda_j\cap D(z,r))}{(1-|z|)\hat{\om}(z)}\leq \frac{\mu(D(z,r))}{(1-|z|)\hat{\om}(z)}\sum_{k=j-N}^{j+N}\chi_k(z)=\sum_{k=j-N}^{j+N}(\widehat{\mu_r}\chi_k)(z).
$$
Therefore, by Theorem 3 in \cite{PjaRjSk2018jga},
\begin{align}
\|\T_\mu\|_{\mathcal{S}_{p,q}}
=\|\{\|\T_{\mu\chi_k}\|_{\mathcal{S}_p}\}_{k=0}^\infty\|_{l^q}
 \approx \|\{\|\widehat{(\mu\chi_k)_r}\|_{L^p_\lambda}\}_{k=0}^\infty\|_{l^q}    \approx \|\{\|\widehat{\mu_r}\chi_k\|_{L^p_\lambda}\}_{k=0}^\infty\|_{l^q}
=\|\widehat{\mu_r}\|_{\mathrm{K}_q^p(\lambda)}    \nonumber
\end{align}
and
\begin{align}
\|\T_\mu\|_{\mathcal{S}_{p,q}}
=\|\{\|\T_{\mu\chi_k}\|_{\mathcal{S}_p}\}_{k=0}^\infty\|_{l^q}
&\approx \|\{\|\{\widehat{(\mu\chi_k)_r}(a_j)\}_{j=1}^\infty\|_{l^p}\}_{k=0}^\infty\|_{l^q}   \nonumber\\
&\approx \|\{\|\{(\widehat{\mu_r}\chi_k)(a_j)\}_{j=1}^\infty\|_{l^p}\}_{k=0}^\infty\|_{l^q}
=\|\{\widehat{\mu_r}(a_j)\}\|_{l_q^p}.   \nonumber
\end{align}

({\it ii}). Suppose $r>0$ such that Lemma 8 in \cite{PjaRjSk2018jga} holds. That is,  for all $z\in\DD$ and $w\in D(z,r)$, $|B_z^\om(w)|\approx B_z^\om(z)$.
By Lemma 11 in \cite{PjaRjSk2018jga},
$$\langle \T_\mu B_z^\om, B_z^\om \rangle_{A_\om^2}=\langle B_z^\om,B_z^\om\rangle_{L_\mu^2}.$$
Using (\ref{1025-1}), we get
$$\widehat{\mu_r}(z)=\frac{\mu(D(z,r))}{(1-|z|)\hat{\om}(z)}
\approx\frac{1}{B_z^\om(z)}\int_{D(z,r)} |B_z^\om(w)|^2d\mu(w)\leq \widetilde{\T_\mu}(z).
$$
So, $$\|\T_\mu\|_{\mathcal{S}_{p,q}}\approx\|\widehat{\mu_r}\|_{\mathrm{K}_q^p(\lambda)}\lesssim \|\widetilde{\T_\mu}\|_{\mathrm{K}_q^p(\lambda)}.$$

On the other hand,  since $|B_z^\om(w)|^2$ is subharmonic and $\om\in\dD$,  by Fubini's theorem,
\begin{align*}
\widetilde{\T_\mu}(z)
&=\frac{1}{B_z^\om(z)}\int_\DD |B_z^\om(w)|^2d\mu(w)    \lesssim \frac{1}{B_z^\om(z)}\int_\DD \int_{D(w,r)}\frac{|B_z^\om(\eta)|^2}{(1-|\eta|)^2}dA(\eta)d\mu(w)  \\
&=\frac{1}{B_z^\om(z)}\int_\DD \widehat{\mu_r}(\eta) |B_z^\om(\eta)|^2\frac{\hat{\om}(\eta)dA(\eta)}{1-|\eta|} =B_\om(\widehat{\mu_r})(z).
\end{align*}
By Lemma \ref{1025-2} we have
$$\|\widetilde{\T_\mu}\|_{\mathrm{K}_q^p(\lambda)}
\lesssim \|B_\om(\widehat{\mu_r})\|_{\mathrm{K}_q^p(\lambda)}
\lesssim \|\widehat{\mu_r}\|_{\mathrm{K}_q^p(\lambda)}
\approx\|\T_\mu\|_{\mathcal{S}_{p,q}},$$
when $1\leq p\leq\infty$.

Next we prove the case of $p<1$.  For $z\in \DD$,
\begin{align*}
\widetilde{\T_\mu}(z)
&=\frac{1}{B_z^\om(z)}\int_\DD |B_z^\om(w)|^2d\mu(w)
\leq \frac{1}{B_z^\om(z)}\sum_{j=1}^\infty\int_{D(a_j,r)}|B_z^\om(w)|^2d\mu(w)\\
&\leq \frac{1}{B_z^\om(z)}\sum_{j=1}^\infty\mu(D(a_j,r))\sup_{w\in\ol{D(a_j,r)}}|B_z^\om(w)|^2.
\end{align*}
Using the subharmonicity  of $|B_z^\om|^{2p}$,
\begin{align*}
\big(\widetilde{\T_\mu}(z)\big)^p
\leq& \frac{1}{(B_z^\om(z))^p}\sum_{j=1}^\infty(\mu(D(a_j,r)))^p\sup_{w\in\ol{D(a_j,r)}}|B_z^\om(w)|^{2p}\\
\lesssim &\frac{1}{(B_z^\om(z))^p}\sum_{j=1}^\infty\left(\widehat{\mu_r}(a_j)\right)^p  \frac{(1-|a_j|)^{p}\hat{\om}(a_j)^p}{(1-|a_j|)^2}
\int_{D(a_j,2r)}|B_z^\om(w)|^{2p}dA(w).
\end{align*}
By Fubini's theorem and Lemma \ref{1104-3} {\it (i)},
\begin{align*}
 \int_{\Lambda_k}\int_{D(a_j,2r)}|B_z^\om(w)|^{2p}dA(w)d\lambda(z)
\approx&2^{2k}\int_{D(a_j,2r)}\int_{\Lambda_k}|B_z^\om(w)|^{2p}dA(z)dA(w)\\
\lesssim &2^k \int_{D(a_j,2r)}\int_0^{(1-\frac{1}{2^{k+1}})\frac{|w|+1}{2}}\frac{ dtdA(w)}{\hat{\om}(t)^{2p}(1-t)^{2p}}.
\end{align*}
Let $\upsilon(z)=\frac{(1-|z|)^{p}\hat{\om}(z)^p}{(1-|z|)^2}$.
Since $\upsilon\in\dD$, there exist $C_1,C_2>1$ and $K>1$, such that
$$\hat{\upsilon}(t)
=\int_t^\frac{1+t}{2}\upsilon(s)ds+\hat{\upsilon}(\frac{1+t}{2})
>\int_t^\frac{1+t}{2}\upsilon(s)ds+\frac{1}{C_1}\hat{\upsilon}(t) $$
and
$$\hat{\upsilon}(t)
=\int_t^{1-\frac{1-t}{K}}\upsilon(s)ds+\hat{\upsilon}(1-\frac{1-t}{K})
<\int_t^{1-\frac{1-t}{K}}\upsilon(s)ds+\frac{1}{C_2}\hat{\upsilon}(t).$$
So,
$$(1-t)\upsilon(t)\approx \int_t^\frac{1+t}{2}\upsilon(s)ds
\lesssim\hat{\upsilon}(t)\lesssim
\int_t^{1-\frac{1-t}{K}}\upsilon(s)ds\approx (1-t)\upsilon(t),
$$
which implies that $\upsilon\in\R$. Therefore,
\begin{align*}
\int_0^{(1-\frac{1}{2^{k+1}})\frac{|w|+1}{2}}\frac{1}{\hat{\om}(t)^{2p}(1-t)^{2p}} dt
&\approx \int_0^{(1-\frac{1}{2^{k+1}})\frac{|w|+1}{2}}\frac{1}{{\hat{\upsilon}}(t)^2(1-t)^2} dt\\
&\lesssim \frac{1}{\left(\hat{\upsilon}((1-\frac{1}{2^{k+1}})\frac{|w|+1}{2})\right)^2\Big(1-(1-\frac{1}{2^{k+1}})\frac{|w|+1}{2}\Big)}.
\end{align*}
Since $w\in D(a_j,r)$,  we get
\begin{align*}
1-(1-\frac{1}{2^{k+1}})\frac{|a_j|+1}{2}  \approx  1-(1-\frac{1}{2^{k+1}})\frac{|w|+1}{2}.
\end{align*}
Hence,
$$ \int_{\Lambda_k}\int_{D(a_j,2r)}|B_z^\om(w)|^{2p}dA(w)d\lambda(z)
\lesssim  \frac{2^k(1-|a_j|)^2}{\left(\hat{\upsilon}((1-\frac{1}{2^{k+1}})\frac{|a_j|+1}{2})\right)^2\Big(1-(1-\frac{1}{2^{k+1}})\frac{|a_j|+1}{2}\Big)}.$$
Let $\xi_m=\left(\sum_{a_j\in \Lambda_m}(\widehat{\mu_r}(a_j))^p\right)^\frac{1}{p}$. Then
\begin{align*}
&\int_{\Lambda_k}(\widetilde{\T_\mu}(z))^p d\lambda(z)\\
\lesssim&  \frac{1}{2^{(k+1)p}}\hat\om(1-\frac{1}{2^{k+1}})^p\sum_{m=0}^\infty  \xi_m^p
\frac{\frac{1}{2^{(m+1)p}}\hat{\om}(1-\frac{1}{2^{m+1}})^p}{\frac{1}{2^{2(m+1)}}}
\frac{\frac{2^{k}}{2^{2(m+1)}}}{\left(\hat{\upsilon}(1-\frac{1}{2^{k+1}}-\frac{1}{2^{m+1}})\right)^2(\frac{1}{2^{k+1}}+\frac{1}{2^{m+1}})}\\
\approx
&\sum_{m=0}^\infty\xi_m^p\frac{\hat{\upsilon}(1-\frac{1}{2^{m+1}})\hat{\upsilon}(1-\frac{1}{2^{k+1}})}
{\hat{\upsilon}(1-\frac{1}{2^{m+1}}-\frac{1}{2^{k+1}})^2}
\frac{1}{1+2^{m-k}}.
\end{align*}
Similarly to get (\ref{1031-1}) and (\ref{1031-2}), there exists $\varepsilon>0$ such that
$$\int_{\Lambda_k}\big(\widetilde{\T_\mu}(z)\big)^p d\lambda(z)\lesssim \sum_{m=0}^\infty\xi_m^p \cdot\frac{1}{2^{\varepsilon|k-m|}}.$$
Note that
$$\|\{\xi_m\}\|_{l^q}
=\left\|\left\{\left(\sum_{j=1}^\infty ((\widehat{\mu_r}\chi_m)(a_j))^p\right)^\frac{1}{p}\right\}_{m=0}^\infty\right\|_{l^q}
=\|\{\widehat{\mu_r}(a_j)\}\|_{l^p_q}
\approx \|\T_\mu\|_{{\mathcal{S}}_{p,q}}.$$

If $0<p<q=\infty$, then
\begin{align*}
\|\widetilde{\T_\mu}\|_{\mathrm{K}_\infty^p(\lambda)}=\sup_{k\geq 0} \left(\int_{\Lambda_k}\big(\widetilde{\T_\mu}(z)\big)^p d\lambda(z)\right)^\frac{1}{p}
\lesssim \|\{\xi_m\}\|_{l^\infty}
\approx \|\T_\mu\|_{{\mathcal{S}}_{p,\infty}}.
\end{align*}

When $0<p<q<\infty$, let $X=\{x_k\}$ and $Y=\{y_k\}$, where $x_k=\frac{1}{2^{\varepsilon|k|}}$ and
$$y_k=\left\{
\begin{array}{cc}
\xi_k^p,   & k\geq 0, \\
 0, & k<0.
\end{array}
\right.$$
Then,
\begin{align*}
\|\widetilde{\T_\mu}\|_{\mathrm{K}_q^p(\lambda)}
&=\left(\sum_{k=0}^\infty  \left(\int_{\Lambda_k}\big(\widetilde{\mu}(z)\big)^p d\lambda(z)\right)^\frac{q}{p}\right)^\frac{1}{q}
\lesssim\left(\sum_{k=0}^\infty  \left(\sum_{m=0}^\infty\xi_m^p \cdot\frac{1}{2^{\varepsilon|k-m|}}\right)^\frac{q}{p}\right)^\frac{1}{q}  \\
&=\|X*Y\|_{l^\frac{q}{p}}^\frac{1}{p}  \leq \left(\|X\|_{l^1}\|Y\|_{l^\frac{q}{p}}\right)^\frac{1}{p} \lesssim \|\xi_m\|_{l^q}\approx \|\T_\mu\|_{\mathcal{S}_{p,q}}.
\end{align*}

If $0<q\leq p<1$, then
\begin{align*}
\|\widetilde{\T_\mu}\|_{\mathrm{K}_q^p(\lambda)}
&=\left(\sum_{k=0}^\infty  \left(\int_{\Lambda_k}\big(\widetilde{\T_\mu}(z)\big)^p d\lambda(z)\right)^\frac{q}{p}\right)^\frac{1}{q}
\lesssim\left(\sum_{k=0}^\infty  \left(\sum_{m=0}^\infty\xi_m^p \cdot\frac{1}{2^{\varepsilon|k-m|}}\right)^\frac{q}{p}\right)^\frac{1}{q}  \\
&\lesssim\left(\sum_{k=0}^\infty  \sum_{m=0}^\infty\xi_m^q \cdot\frac{1}{2^{\frac{q\varepsilon|k-m|}{p}}}\right)^\frac{1}{q}
\lesssim\left(\sum_{m=0}^\infty\xi_m^q\sum_{k=-\infty}^\infty   \cdot\frac{1}{2^{\frac{q\varepsilon|k|}{p}}}\right)^\frac{1}{q} \lesssim \|\xi_m\|_{l^q}\approx \|\T_\mu\|_{\mathcal{S}_{p,q}}.
\end{align*}

When $0=q<p<1$, for any given $M\in\N$,
\begin{align*}
\limsup_{k\to\infty}\int_{\Lambda_k}\big(\widetilde{\T_\mu}(z)\big)^p d\lambda(z)
&\lesssim \limsup_{k\to\infty}\left(\sum_{m=0}^M+\sum_{m=M+1}^\infty\right)\xi_m^p \cdot\frac{1}{2^{\varepsilon|k-m|}}
\leq \sup_{m>M}\xi_m^p.
\end{align*}
Letting $M\to\infty$, we obtain
$$\limsup_{k\to\infty}\left(\int_{\Lambda_k}\big(\widetilde{\T_\mu}(z)\big)^p d\lambda(z)\right)^\frac{1}{p}\lesssim \limsup_{m\to\infty}\xi_m.$$
So, $$\|\widetilde{\T_\mu}\|_{\mathrm{K}_0^p(\lambda)}\lesssim \|\xi_m\|_{l^0}\approx\|\T_\mu\|_{\mathcal{S}_{p,q}}.$$
The proof is complete.\hfill\hfil$\square$

\end{document}